\theoremstyle{plain}
\newtheorem{thm}{Theorem}[section]
\newtheorem{prop}[thm]{Proposition}
\newtheorem{cor}[thm]{Corollary}
\newtheorem{lem}[thm]{Lemma}
\theoremstyle{definition}
\newtheorem{defn}[thm]{Definition}
\newtheorem{ex}[thm]{Example}
\newtheorem{rem}[thm]{Remark}
\date{}
\newcommand{\N}{{\mathbb N}}
\newcommand{\hgt}{{\rm ht}\,}
\newcommand{\bight}{{\rm bight}\,}
\newcommand{\ara}{{\rm ara}\,}
\newcommand{\pd}{{\rm pd}}
\title[The arithmetical rank of the edge ideals of graphs with whiskers]{The arithmetical rank of the edge ideals \\ of graphs with whiskers}
\author{Antonio Macchia}
\address{\normalsize Antonio Macchia, Dipartimento di Matematica, Università degli Studi di Bari ``Aldo Moro'',\break Via Orabona 4, 70125 Bari, Italy}
\email{\normalsize \url{antonio.macchia@uniba.it}}
\begin{document}

\begin{abstract}
We consider the edge ideals of large classes of graphs with whiskers and for these ideals we prove that the arithmetical rank is equal to the big height. Then we extend these results to other classes of squarefree monomial ideals, generated in any degree, proving that the same equality holds.
\end{abstract}

%%%% ----------------------------------------------------------------------
\maketitle
%%%% ----------------------------------------------------------------------

\noindent {\bf Mathematics Subject Classification (2010):} 13A15, 13F55, 14M10, 05C05, 05C38.

\noindent {\bf Keywords:} Set-theoretic complete intersection ideals, arithmetical rank, edge ideals, whiskers, facet ideals.

\section{Introduction}

Given a Noetherian commutative ring with identity $R$, the \textit{arithmetical rank} (ara) of a proper ideal $I$ of $R$ is defined as the smallest integer $s$ for which there exist $s$ elements $a_1,\dots,a_s$ of $R$ such that the ideal $(a_1,\dots,a_s)$ has the same radical as $I$. In this case we will say that $a_1,\dots,a_s$ generate $I$ \textit{up to radical}. In general $\hgt I \leq \ara I$. If equality holds, $I$ is called a \textit{set-theoretic complete intersection}. As a consequence of the Auslander-Buchsbaum formula, whenever an ideal of $R=k[x_1,\dots,x_n]$ generated by squarefree monomials is a set-theoretic complete intersection, it is a Cohen-Macaulay ideal. The converse is not always true.
\\ We consider the case where $R$ is a polynomial ring over a field $K$ and $I$ is the so-called \textit{edge ideal} of a graph whose vertices are the indeterminates. Its set of generators is formed by the products of the pairs of indeterminates that form the edges of the graph. Thus $I$ is generated by squarefree monomials of degree 2, and is therefore a radical ideal. Large classes of graphs whose edge ideals are Cohen-Macaulay were described by Villarreal \cite{V90}. The arithmetical rank of edge ideals has recently been studied by several authors (see e.g. Kummini \cite{Ku09}) and explicitly determined for some special types of graphs.
\\ According to a well-known result by Lyubeznik \cite{L83}, if $I$ is a squarefree monomial ideal, the projective dimension of the quotient ring $R/I$, denoted $\pd_R\, R/I$, provides a lower bound for the arithmetical rank of $I$. We define the big height of $I$, denoted $\bight I$, as the maximum height of the minimal prime ideals of $I$. In general, we have $\hgt I \leq \bight I \leq \pd_R\, R/I \leq \ara I$. If $I$ is not unmixed, then $I$ is not a set-theoretic complete intersection, but it could still be true that $\bight I = \pd_R\, R/I = \ara I$. This equality has been established for the edge ideals of acyclic graphs (the so-called \textit{forests}) by Kimura and Terai \cite{KT13} (extending a result by Barile \cite{B08}). A weaker condition is the equality between the arithmetical rank and the projective dimension. This is the case for lexsegment edge ideals (see Ene, Olteanu, Terai \cite{EOT10}), for the graphs formed by one or two cycles connected through a path (\textit{cyclic} and \textit{bicyclic} graphs, see Barile, Kiani, Mohammadi and Yassemi \cite{BKMY12}) and for the graphs consisting of paths and cycles with a common vertex (see Kiani and Mohammadi \cite{KM12}). In all these cases, the arithmetical rank is independent of the field $K$.
\\ As a consequence of what we said above, the classes of Cohen-Macaulay monomial ideals are candidate to be set-theoretic complete intersections. We consider the family of \textit{whisker graphs}, obtained by adding a \textit{whisker} to each vertex of a given graph, i.e., by attaching a terminal edge to all its vertices. More in general, we can define the \textit{fully clique-whiskered graphs} in the following way: given a graph $G$, a subset $C$ of the vertex set $V(G)$ is a \textit{clique} if it induces a complete subgraph of $G$. If we partition $V(G)$ in cliques $W_1,\dots,W_t$ and add a new vertex $w_i$ for every clique and the edges $v w_i$ for every $v \in W_i$, then we call the resulting graph a \textit{fully clique-whiskered graph}. Cook and Nagel \cite{CN12} proved that the edge ideals of the fully clique-whiskered graphs are Cohen-Macaulay. In Section 3 we prove that the big height and the arithmetical rank are equal for a larger class of graphs with whiskers and, as a consequence, we will deduce that the edge ideals of the fully clique-whiskered graphs are set-theoretic complete intersections.
\\ The notion of whisker graph can be generalized in another direction. First we can consider a simplicial complex $\Delta$ on the vertex set formed by the indeterminates, instead of a graph, and define the \textit{facet ideal} as the ideal generated by the squarefree monomials corresponding to the facets of $\Delta$. Then we can add a simplex to each vertex of $\Delta$ and suppose that these simplices are pairwise disjoint. Faridi proved in \cite{F05} that the facet ideals obtained in this way are Cohen-Macaulay. In Section 4 we strengthen this result by showing that they are also set-theoretic complete intersections. Finally, we will add an arbitrary number of simplices to each vertex of $\Delta$ and prove that the big height of the facet ideal of the simplicial complex obtained in this way equals the arithmetical rank. All the results presented in this paper are independent of the field $K$.

\section{Preliminaries}

A useful technique that provides an upper bound for the arithmetical rank of ideals is the following result due to Schmitt and Vogel.

\begin{lem} \bf (\cite{SV79}, p. 249) \it \label{SchmittVogel}
Let $R$ be a commutative ring with identity and $P$ be a finite subset of elements of $R$. Let $P_0,\dots,P_r$ be subsets of $P$ such that
\begin{itemize}
\item[\textit{(i)}] $\bigcup_{i=0}^r P_i = P$;
\item[\textit{(ii)}] $P_0$ has exactly one element;
\item[\textit{(iii)}] if $p$ and $p'$ are different elements of $P_i$ ($0<i<r$), there is an integer $i'$, with $0 \leq i' < i$, and an element in $P_{i'}$ which divides $pp'$.
\end{itemize}
We set $q_i=\sum_{p \in P_i} p^{e(p)}$, where $e(p) \geq 1$ are arbitrary integers. We will write $(P)$ for the ideal of $R$ generated by the elements of $P$. Then
$$
\sqrt{(P)}=\sqrt{(q_0,\dots,q_r)}.
$$
\end{lem}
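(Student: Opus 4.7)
The plan is to establish the nontrivial inclusion $\sqrt{(P)}\subseteq\sqrt{(q_0,\dots,q_r)}$ by induction on $i$, showing that for every $i\in\{0,1,\dots,r\}$ and every $p\in P_i$ some power of $p$ lies in $J_i:=(q_0,\dots,q_i)$. Since $\bigcup_i P_i = P$ by (i), this would give $P\subseteq\sqrt{(q_0,\dots,q_r)}$. The reverse inclusion is immediate, because each $q_i$ belongs to $(P)$, so $(q_0,\dots,q_r)\subseteq(P)$.

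The base case follows at once from (ii): $P_0=\{p_0\}$ and $q_0=p_0^{e(p_0)}\in J_0$. For the inductive step, I would fix $i\geq 1$ and $p\in P_i$ and write
$$
p\,q_i \;=\; p^{e(p)+1}+S,\qquad S:=\sum_{p'\in P_i\setminus\{p\}} p\,(p')^{e(p')}.
$$
By (iii), for every $p'\in P_i\setminus\{p\}$ there exist $i'<i$ and $p''\in P_{i'}$ with $p''\mid pp'$. The inductive hypothesis applied to $p''$ gives an exponent $M_{p'}$ with $(p'')^{M_{p'}}\in J_{i'}\subseteq J_{i-1}$, and consequently $(pp')^{M_{p'}}\in J_{i-1}$. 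Taking $N:=\max_{p'}M_{p'}$ (a maximum over a finite set) yields a single exponent for which $\bigl(p(p')^{e(p')}\bigr)^{N}$ is divisible by $(pp')^N\in J_{i-1}$, for every $p'\in P_i\setminus\{p\}$.

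Next I would expand $S^{N(|P_i|-1)}$ by the multinomial theorem. Each resulting monomial has the form $\prod_{p'}\bigl(p(p')^{e(p')}\bigr)^{n_{p'}}$ with $\sum_{p'}n_{p'}=N(|P_i|-1)$, so by pigeonhole some $n_{p'}\geq N$; therefore every such monomial lies in $J_{i-1}$, and $S^{N(|P_i|-1)}\in J_{i-1}$. Finally, from the identity $p^{e(p)+1}=p\,q_i-S$ I would raise both sides to the $N(|P_i|-1)$-th power and expand via the binomial theorem:
$$
p^{(e(p)+1)N(|P_i|-1)}=\bigl(p\,q_i-S\bigr)^{N(|P_i|-1)}.
$$
Every term of the expansion except $(-S)^{N(|P_i|-1)}$ carries a factor of $p\,q_i$ and hence lies in $(q_i)\subseteq J_i$, while that remaining term lies in $J_{i-1}\subseteq J_i$ by the previous paragraph. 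This shows that a power of $p$ lies in $J_i$, closing the induction.

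The delicate point is the combinatorial bookkeeping: the inductive hypothesis produces, a priori, different exponents $M_{p'}$ for different $p'\in P_i\setminus\{p\}$, and one has to unify them into a single $N$ and then choose the correct power of $S$ so that the multinomial/pigeonhole argument forces every monomial of $S^{N(|P_i|-1)}$ into $J_{i-1}$. Once this is arranged, isolating $p^{e(p)+1}$ from $p\,q_i-S$ and expanding concludes the proof with no further difficulty.
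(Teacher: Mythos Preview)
The paper does not supply its own proof of this lemma; it is quoted from Schmitt and Vogel \cite{SV79} and used as a black box throughout. Your argument is correct and is essentially the classical proof: one shows by induction on $i$ that every element of $P_i$ lies in $\sqrt{(q_0,\dots,q_i)}$, using condition~(iii) together with a pigeonhole/multinomial expansion to force the cross-terms of a suitable power of $S$ into the ideal generated by the earlier $q_j$'s, and then isolating a power of $p$ via the binomial expansion of $(p\,q_i-S)^M$. The only cosmetic omission is the degenerate case $|P_i|=1$, where $q_i=p^{e(p)}$ already gives $p\in\sqrt{J_i}$ with no further work, so the exponent $N(|P_i|-1)=0$ causes no trouble. (Incidentally, condition~(iii) as printed in the paper reads $0<i<r$; for the lemma to be true this must be $0<i\leq r$, and your induction correctly uses that version.)
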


Another method to estimate the arithmetical rank of monomial ideals involves the Lyubeznik resolutions and was developed by Kimura in \cite{Ki09}.

Let $R=k[x_1,\dots,x_n]$ and $I$ a monomial ideal of $R$ with minimal set of generators $\left\{ u_1,\dots,u_{\mu} \right\}$, where $\mu = \mu(I)$ is the minimum number of generators of $I$. The \textit{Taylor resolution} of $I$ is
$$
T_{\bullet}: 0 \rightarrow T_{\mu} \stackrel{d_{\mu}}{\longrightarrow} T_{\mu-1} \stackrel{d_{\mu-1}}{\longrightarrow} \cdots \stackrel{d_1}{\longrightarrow} T_0 \rightarrow R/I \rightarrow 0,
$$
where $T_0 = Re_{\varnothing}$, $\displaystyle{T_s = \bigoplus_{1 \leq i_1 < \cdots < i_s \leq \mu} Re_{i_1 \cdots i_s}}$ and $e_{i_1 \cdots i_s}$ are free basis elements of $T_s$, with
$$
\deg\,e_{i_1 \cdots i_s} = \deg\,{\rm lcm}(u_{i_1},\dots,u_{i_s}).
$$
The differentials are defined by
$$
d_s(e_{i_1 \cdots i_s}) = \sum_{j=1}^s (-1)^{j-1} \frac{{\rm lcm}(u_{i_1},\dots,u_{i_s})}{{\rm lcm}(u_{i_1},\dots,\widehat{u_{i_j}},\dots,u_{i_s})} e_{i_1 \cdots \widehat{i_j} \cdots i_s}.
$$
A \textit{Lyubeznik resolution} is a graded free resolution of $R/I$ which is a subcomplex of the Taylor resolution of $I$.

\begin{defn} \bf (\cite{L88}) \rm
For every $1 \leq i_1 < \cdots < i_s \leq \mu$, the symbol $e_{i_1 \cdots i_s}$ is called \textit{$L$-admissible} if $u_q$ does not divide ${\rm lcm}(u_{i_t}, u_{i_{t+1}}, \dots, u_{i_s})$ for all $1 \leq t < s$ and for all $1 \leq q < i_t $. The \textit{Lyubeznik resolution} of $I$ is the subcomplex of the Taylor resolution of $I$ generated by all $L$-admissible symbols.
\end{defn}

In the following we will identify the symbol $e_{i_1 \cdots i_s}$ with the sequence of monomials $u_{i_1},\dots,u_{i_s}$.
\\ A Lyubeznik resolution of $I$ depends on the order of the generators $u_1, u_2, \dots, u_{\mu}$.

\begin{defn}
The \textit{$L$-length} $\lambda$ of $I$ is the minimum length of the Lyubeznik resolutions of $I$.
\end{defn}

While the Taylor resolution of $I$ is far from being a minimal graded free resolution of $I$, a Lyubeznik resolution of $I$ often is minimal.

\begin{thm} \bf (\cite{Ki09}, Theorem 1) \it \label{Kimura}
Let $I$ a monomial ideal of $R=k[x_1,\dots,x_n]$, then
$$
\ara I \leq \lambda.
$$
\end{thm}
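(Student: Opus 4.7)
The plan is to exhibit $\lambda$ elements of $R$ that generate $I$ up to radical, using the Schmitt-Vogel Lemma (Lemma \ref{SchmittVogel}) applied to the minimal monomial generating set of $I$, partitioned according to the combinatorics of $L$-admissible symbols.

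First, fix an ordering $u_1,\ldots,u_\mu$ of the minimal generators of $I$ realizing the $L$-length $\lambda$. For each $i$, define the level
$$
\ell(u_i)=\max\{s\geq 1:\text{there exists an $L$-admissible symbol } e_{i,j_2,\ldots,j_s}\},
$$
taking values in $\{1,2,\ldots,\lambda\}$, with $\ell(u_i)=\lambda$ attained by at least one generator. Group the generators by assigning to $P_k$ those $u_i$ with $\ell(u_i)=\lambda-k$, for $k=0,1,\ldots,\lambda-1$; a small adjustment—singling out a distinguished generator of maximal level and redistributing any surplus maximal-level generators into $P_1$—can ensure $|P_0|=1$. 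Conditions (i) and (ii) of Lemma \ref{SchmittVogel} are then immediate.

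The substantive step is condition (iii): given distinct $u_j,u_k\in P_i$ (so $\ell(u_j)=\ell(u_k)=\lambda-i$), one must produce $u_m\in P_{i'}$ with $i'<i$ dividing $u_j u_k$. The idea is to take a maximal $L$-admissible chain $e_{k,k_2,\ldots,k_{\lambda-i}}$ starting at $u_k$ (assume $j<k$) and observe that the prepended symbol $e_{j,k,k_2,\ldots,k_{\lambda-i}}$ cannot be $L$-admissible; otherwise $\ell(u_j)$ would exceed $\lambda-i$, contradicting $u_j\in P_i$. Since the admissibility conditions at positions $t\geq 2$ of the prepended symbol coincide with those of the original chain and therefore hold automatically, the failure must occur at position $t=1$: there exists $q<j$ with $u_q\mid{\rm lcm}(u_j,u_k,u_{k_2},\ldots,u_{k_{\lambda-i}})$.

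The main obstacle is converting this abstract obstruction into an actual divisor of $u_j u_k$ of strictly higher level. A priori $u_q$ only divides a large lcm and need not have level exceeding $\lambda-i$. A careful analysis—perhaps by choosing $u_q$ minimally among possible obstructions, or by iterating the extraction until both the divisibility $u_m\mid u_j u_k$ and the level inequality $\ell(u_m)>\lambda-i$ are achieved simultaneously—is what secures condition (iii). Once this is done, Lemma \ref{SchmittVogel} yields elements $q_0,q_1,\ldots,q_{\lambda-1}$ with $\sqrt{(q_0,q_1,\ldots,q_{\lambda-1})}=\sqrt{I}$, whence $\ara I\leq\lambda$.
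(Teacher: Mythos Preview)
The paper does not prove this theorem; it merely cites Kimura \cite{Ki09} and uses the result as a black box. So there is no ``paper's own proof'' to compare against here.

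As for your proposal: it is a sketch rather than a proof, and the gap is exactly where you say it is. You reduce condition~\textit{(iii)} of Lemma~\ref{SchmittVogel} to the following situation: given $u_j,u_k\in P_i$ with $j<k$ and a maximal admissible chain $e_{k,k_2,\ldots,k_{\lambda-i}}$, the prepended symbol fails at $t=1$, so some $u_q$ with $q<j$ divides ${\rm lcm}(u_j,u_k,u_{k_2},\ldots,u_{k_{\lambda-i}})$. Two things must then be shown: (a) $u_q\mid u_ju_k$, and (b) $\ell(u_q)>\lambda-i$. Your iteration (replace $j$ by the minimal such $q$, repeat) does eventually give an admissible chain $e_{q^{*},k,k_2,\ldots}$ of length $\lambda-i+1$, securing (b) for $u_{q^{*}}$. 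But nothing in the argument forces (a): at each step $u_q$ is only known to divide an lcm that involves the tail $u_{k_2},\ldots,u_{k_{\lambda-i}}$, and those variables need not lie in $u_ju_k$. The phrases ``a careful analysis'' and ``perhaps by choosing $u_q$ minimally'' do not bridge this; you would need a genuinely new idea to force divisibility of the product $u_ju_k$ rather than of the long lcm.

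One smaller remark: your ``small adjustment'' to ensure $|P_0|=1$ is both unnecessary and dangerous. It is unnecessary because only $u_1$ can have level $\lambda$: if $i>1$ had $\ell(u_i)=\lambda$ via an admissible $e_{i,j_2,\ldots,j_\lambda}$, then prepending $1$ gives $e_{1,i,j_2,\ldots,j_\lambda}$, which is admissible (the new $t=1$ condition is vacuous), contradicting the definition of $\lambda$. It is dangerous because moving surplus maximal-level generators into $P_1$ would create new pairs in $P_1$ whose product must be divisible by the single element of $P_0$, and you give no reason for that.
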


In the following we will consider squarefree monomial ideals arising from graphs, the so-called \textit{edge ideals}.

\begin{defn}
Let $G$ be a graph with vertex set $V(G)= \{ x_1,\dots,x_n \}$, with $n \in \N$, $n \geq 1$, and whose edge set is $E(G)$. Suppose that $x_1,\dots,x_n$ are indeterminates over the field $K$. The \textit{edge ideal} of $G$ in the polynomial ring $R=K \left[ x_1,\dots,x_n \right]$ is the squarefree monomial ideal
$$
I(G)=\left( \left\{ x_i x_j \ \big|\ \{ x_i, x_j \} \in E(G) \right\} \right).
$$
For the sake of simplicity, we will use the same notation $x_i x_j$ for the monomial and for the corresponding edge.
\end{defn}

Let $G$ be a graph and $x$ a vertex of $G$. Adding a \textit{whisker} to the vertex $x$ of $G$ means adding a new vertex $y$ and the edge connecting $x$ and $y$.

\begin{defn}
A subset $C$ of $V(G)$ is a \textit{clique} if it induces a complete subgraph of $G$. A \textit{clique vertex-partition} of $G$ is a set $\pi = \{W_1,\dots,W_t\}$ of disjoint (possibly empty) cliques of $G$ whose union is $V(G)$.
\end{defn}

Notice that $G$ may admit many different clique vertex-partitions, and every graph has at least one clique vertex-partition, namely the trivial partition $\tau = \{\{x_1\},\dots,\{x_n\}\}$.

\begin{defn}
Given a clique $W$ of $G$, a \textit{clique-whiskering} of $W$ is given by adding a new vertex $w$ and connecting $w$ to every vertex in $W$. Let $\pi = \{ W_1,\dots,W_t \}$ be a clique vertex-partition of $G$. Consider the clique-whiskering of every clique of $\pi$ obtained by adding the vertex $w_i$ to $W_i$ (where $w_i \neq w_j$ if $i \neq j$). We call the graph $G^{\pi}$ obtained in this way \textit{fully clique-whiskered}. This graph has vertex set $V(G) \cup \{ w_1,\dots,w_t \}$ and edge set $E(G) \cup \{ vw_i\ |\ v \in W_i \}$.
\end{defn}

If $\tau$ is the trivial partition, we call the $G^{\tau}$ the \textit{whisker graph on} $G$. Note that empty cliques produce isolated vertices.

\begin{ex}
Let $G$ be the three-cycle $C_3$ on the vertex set $\{x_1,x_2,x_3\}$. There are three distinct clique vertex-partitions of $G$ (without empty cliques): the trivial partition $\tau = \{\{x_1\},\{x_2\},\{x_3\}\}$, $\pi = \{ \{x_1,x_2\}, \{x_3\}\}$ and $\rho = \{\{x_1,x_2,x_3\}\}$. These partitions produce the following fully clique-whiskered graphs:
\clearpage
\begin{figure}[ht!]
\begin{subfigure}[t]{0.27\textwidth}
\centering
\psset{unit=1.2cm}
\begin{pspicture}(-0.3,-1.3)(2.5,2.5)
\psline(0,0)(1.5,0)
\psline(1.5,0)(0.75,1.3)
\psline(0.75,1.3)(0,0)
\psline(0.75,1.3)(0.75,2.3)
\psline(1.5,0)(2.37,-0.5)
\psline(0,0)(-0.87,-0.5)
\psdots(0,0)
\psdots(1.5,0)
\psdots(0.75,1.3)
\psdots(0.75,2.3)
\psdots(2.37,-0.5)
\psdots(-0.87,-0.5)
\begin{small}
\uput[150](0,0){$x_1$}
\uput[30](1.5,0){$x_2$}
\uput[0](0.75,1.3){$x_3$}
\uput[180](-0.87,-0.5){$y_1$}
\uput[0](2.37,-0.5){$y_2$}
\uput[0](0.75,2.3){$y_3$}
\rput(-0.5,2){$G^{\tau}$}
\end{small}
\end{pspicture}
\end{subfigure}\hspace{7mm}\begin{subfigure}[t]{0.27\textwidth}
\centering
\psset{unit=1.2cm}
\begin{pspicture}(-0.3,-1.3)(2.5,2.5)
\psline(0,0)(1.5,0)
\psline(1.5,0)(0.75,1.3)
\psline(0.75,1.3)(0,0)
\psline(0.75,1.3)(0.75,2.3)
\psline(1.5,0)(0.75,-1)
\psline(0,0)(0.75,-1)
\psdots(0,0)
\psdots(1.5,0)
\psdots(0.75,1.3)
\psdots(0.75,2.3)
\psdots(0.75,-1)
\begin{small}
\uput[210](0,0){$x_1$}
\uput[-30](1.5,0){$x_2$}
\uput[0](0.75,1.3){$x_3$}
\uput[270](0.75,-1){$y_1$}
\uput[0](0.75,2.3){$y_2$}
\rput(-0.5,2){$G^{\pi}$}
\end{small}
\end{pspicture}
\end{subfigure}\hspace{3mm}\begin{subfigure}[t]{0.27\textwidth}
\centering
\psset{unit=1.2cm}
\begin{pspicture}(-0.3,-1.3)(2.5,2.5)
\psline(0,0)(2,0)
\psline(2,0)(1,1.73)
\psline(1,1.73)(0,0)
\psline(1,1.73)(1,0.58)
\psline(2,0)(1,0.58)
\psline(1,0.58)(0,0)
\psdots(0,0)
\psdots(2,0)
\psdots(1,1.73)
\psdots(1,0.58)
\begin{small}
\uput[210](0,0){$x_1$}
\uput[-30](2,0){$x_2$}
\uput[90](1,1.73){$x_3$}
\uput[270](1,0.58){$y_1$}
\rput(-0.3,2){$G^{\rho}$}
\end{small}
\end{pspicture}
\end{subfigure}
\caption{Clique-whiskerings of the three-cycle}
\end{figure}
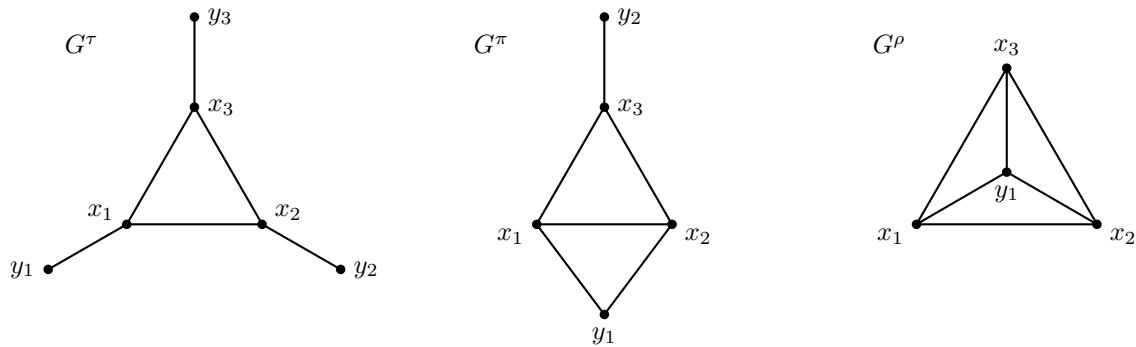
\end{ex}

Cook and Nagel \cite{CN12} have shown the following result:

\begin{thm} \bf (\cite{CN12}, Corollary 3.5) \it \label{cliquewhiskering}
Let $\pi$ be a clique vertex-partition of a graph $G$ and let $G^{\pi}$ be the fully clique-whiskering graph of $G$ on $\pi$. Then the ideal $I(G^{\pi})$ is Cohen-Macaulay.
\end{thm}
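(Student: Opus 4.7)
The plan is to show that the independence complex $\Delta=\mathrm{Ind}(G^{\pi})$ is pure vertex decomposable; since vertex decomposability implies shellability, which by Reisner's criterion implies Cohen--Macaulayness over every field, this will yield that $R/I(G^{\pi})$ is Cohen--Macaulay.

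First I would establish that $\Delta$ is pure. Fix a facet $F$ of $\Delta$, i.e., a maximal independent set of $G^{\pi}$. For every $i$, the set $W_i\cup\{w_i\}$ is a clique of $G^{\pi}$ (because $W_i$ is a clique in $G$ and $w_i$ is joined to every vertex of $W_i$), so $F$ meets it in at most one vertex. Conversely, if $F$ were disjoint from $W_i\cup\{w_i\}$ then $F\cup\{w_i\}$ would still be independent, since the only neighbours of $w_i$ in $G^{\pi}$ lie in $W_i$, contradicting the maximality of $F$. Hence $|F\cap(W_i\cup\{w_i\})|=1$ for every $i$, so $|F|=t=|\pi|$.

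Next I would induct on $|V(G^{\pi})|$, set up so that empty cliques are allowed in $\pi$ (each contributes an isolated whisker $w_i$, which is a cone point of the independence complex and causes no difficulty). The base case $V(G)=\varnothing$ is trivial, as $\Delta$ is then a simplex. Otherwise pick a nonempty $W_i\in\pi$ and a vertex $x\in W_i$; I claim $x$ is a shedding vertex of $\Delta$. Indeed,
\[
\mathrm{del}_{\Delta}(x)=\mathrm{Ind}\bigl((G\setminus x)^{\pi'}\bigr),\qquad
\mathrm{link}_{\Delta}(x)=\mathrm{Ind}\bigl((G'')^{\pi''}\bigr),
\]
where $\pi'$ is obtained from $\pi$ by replacing $W_i$ with $W_i\setminus\{x\}$, and $G''$ is the subgraph of $G$ induced on $V(G)\setminus N_G[x]$ with $\pi''=\{W_j\cap V(G''):j\neq i\}$. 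Both are fully clique-whiskered graphs on strictly fewer vertices, so both complexes are vertex decomposable by the induction hypothesis. For the shedding condition I must check that every facet of $\mathrm{del}_{\Delta}(x)$ is also a facet of $\Delta$. Applying the purity argument of the previous paragraph to the graph $G^{\pi}\setminus x$ (in which $w_i$ is now attached to the clique $W_i\setminus\{x\}$), any facet $F$ of $\mathrm{del}_{\Delta}(x)$ must contain either $w_i$ or some vertex of $W_i\setminus\{x\}$; in either case $F$ contains a neighbour of $x$ in $G^{\pi}$ (the vertices of $W_i\setminus\{x\}$ being adjacent to $x$ because $W_i$ is a clique), so $F\cup\{x\}$ is not independent, as required.

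The main obstacle is the bookkeeping around the partition when $x$ is removed: $W_i\setminus\{x\}$ may become empty and, inside the link, several cliques $W_j$ can lose vertices to $N_G(x)$. Setting up the induction so that empty cliques are allowed from the start is what makes the recursion self-contained; once that convention is fixed, the shedding property together with the two inductive identifications above close the proof.
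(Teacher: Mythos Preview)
The present paper does not give its own proof of this statement: it is quoted as a result of Cook and Nagel \cite{CN12} and is used only as input (via the height computation in the remark that follows) for the set-theoretic complete intersection claims of Section~3. So there is no in-paper argument to compare against.

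That said, your argument is correct and is essentially the one in the cited source. The purity computation is right: each $W_i\cup\{w_i\}$ is a clique of $G^{\pi}$, and maximality forces any facet to meet it. The identifications of $\mathrm{del}_{\Delta}(x)$ and $\mathrm{link}_{\Delta}(x)$ with independence complexes of smaller fully clique-whiskered graphs are accurate; the key point for the link is that $W_i\subseteq N_G[x]$ since $W_i$ is a clique containing $x$, so after deleting $N_{G^{\pi}}[x]$ the $i$-th block disappears entirely and the remaining sets $W_j\cap V(G'')$ with $j\neq i$ form a clique vertex-partition of $G''$. The shedding check is correct for exactly the reason you give, and allowing empty cliques from the outset is precisely what makes the recursion close. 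Vertex decomposability then gives shellability and hence Cohen--Macaulayness over every field, which is the conclusion.
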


\begin{rem} \label{CookNagel}
Suppose that $|V(G)|=n$. Then $\hgt I(G^{\pi})=n$ because $I(G^{\pi})$ is pure (see Bruns-Herzog \cite{BH93}, Cor. 5.1.5) and the ideal generated by the vertices of $G$ is a minimal prime ideal of $I(G^{\pi})$.
\\ Theorem \ref{cliquewhiskering} had previously been proven by Dochtermann and Engstr\"om \cite{DE09} for whisker graphs, which is also a special case of \cite{F05}, Theorem 8.2.
\end{rem}

In the same way as squarefree monomial ideals generated in degree two can be attached to graphs, squarefree monomial ideals with generators of any degree can, more in general, be attached to simplicial complexes. This gives rise to the notion of facet ideal, which has been extensively studied by Faridi in \cite{F02} and \cite{F05}.

\begin{defn}
Let $\Delta$ be a simplicial complex with vertex set $V(\Delta)$ and facets $F_1,\dots,F_q$. A \textit{vertex cover} for $\Delta$ is a subset $A$ of $V(\Delta)$, with the property that for every facet $F_i$ there is a vertex $v \in A$ such that $v \in F_i$. A \textit{minimal vertex cover} of $\Delta$ is a subset $A$ of $V(\Delta)$ such that $A$ is a vertex cover and no proper subset of $A$ is a vertex cover for $\Delta$.
\end{defn}

\begin{defn}
Let $\Delta$ be a simplicial complex on the vertex set $V(\Delta)=\{ x_1,\dots,x_n \}$. The \textit{facet ideal} of $\Delta$ is the ideal $I(\Delta)$ in $k[x_1,\dots,x_n]$ generated by all squarefree monomials $x_{i_1} \cdots x_{i_s}$, such that $\{ x_{i_1},\dots,x_{i_s} \}$ is a facet of $\Delta$.
\end{defn}

\begin{prop} \bf (\cite{F02}, Proposition 1) \it \label{Faridi}
Let $\Delta$ be a simplicial complex over $n$ vertices. Consider the facet ideal $I(\Delta)$ in the polynomial ring $R=k[x_1,\dots,x_n]$. Then an ideal $P=(x_{i_1},\dots,x_{i_s})$ of $R$ is a minimal prime of $I(\Delta)$ if and only if $\{ x_{i_1},\dots,x_{i_s} \}$ is a minimal vertex cover for $\Delta$.
\end{prop}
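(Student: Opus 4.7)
The plan is to split the biconditional into the two implications and, separately, handle the word ``minimal'' on each side. The starting observation is that since $I(\Delta)$ is a squarefree monomial ideal, every minimal prime of $I(\Delta)$ is generated by a subset of the variables $\{x_1,\dots,x_n\}$; this reduces the statement to a correspondence between such variable-generated primes containing $I(\Delta)$ and vertex covers of $\Delta$.

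First I would prove the containment dictionary: if $A = \{x_{i_1},\dots,x_{i_s}\}$ is a subset of $V(\Delta)$ and $P = (x_{i_1},\dots,x_{i_s})$, then $I(\Delta) \subseteq P$ if and only if $A$ is a vertex cover of $\Delta$. For the ``if'' direction, any generator of $I(\Delta)$ is a squarefree monomial $x_{j_1}\cdots x_{j_r}$ corresponding to a facet $F$ of $\Delta$; the vertex cover property forces some $x_{j_k}$ to lie in $A$, and hence the monomial lies in $P$. For the ``only if'' direction, for each facet $F$ the monomial $\prod_{x\in F} x$ lies in $P$, and since $P$ is prime (in fact a monomial prime) some variable of $F$ must lie in $A$, so $A$ meets every facet.

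Second, I would pass from this containment dictionary to minimality. By the previous step, the set of primes of the form $P_A = (x_i : x_i \in A)$ that contain $I(\Delta)$ is in inclusion-preserving bijection with the set of vertex covers of $\Delta$ (ordered by inclusion of underlying vertex sets). Hence $P_A$ is minimal among such primes exactly when $A$ is minimal among vertex covers. Combined with the opening remark that every minimal prime of $I(\Delta)$ has the form $P_A$, this yields the claim.

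The only subtle point, and the one I would be most careful about, is the initial reduction: justifying that minimal primes of a squarefree monomial ideal are monomial (and in fact generated by variables). This follows from the standard fact that the associated primes of a monomial ideal are monomial, specialized to the squarefree case, where monomial primes are exactly those generated by subsets of the variables; I would cite or briefly recall this. Once it is in place, the rest is a direct translation between the algebraic notion (primes containing $I(\Delta)$) and the combinatorial notion (vertex covers of $\Delta$), with minimality on each side corresponding under the bijection.
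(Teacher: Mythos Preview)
Your argument is correct and is the standard proof of this fact. Note, however, that the paper does not supply its own proof of this proposition: it is stated as a citation of \cite{F02}, Proposition~1, and used as a black box. So there is no ``paper's proof'' to compare against; your write-up simply fills in the well-known argument behind the cited result.
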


Let us consider a simplicial complex $\Delta$ on the vertex set $V(\Delta) = \{ x_1,\dots,x_n \}$. For every vertex $x_i$, we add a facet $F_i$ of dimension $\geq 1$ such that
\begin{itemize}
\item $F_i \cap V(\Delta) = \{ x_i \}$, for every $i=1,\dots,n$,
\item $F_i \cap F_j = \varnothing$ if $i \neq j$, for every $i,j=1,\dots,n$.
\end{itemize}
We will call $\Delta'$ the simplicial complex obtained in this way.

\begin{thm} \bf (\cite{F05}, Theorem 8.2) \it
With respect to the above notations, the facet ideal $I(\Delta')$ is Cohen-Macaulay.
\end{thm}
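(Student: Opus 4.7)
The plan is to prove the statement in two stages: first establish that $I(\Delta')$ is unmixed of height $n$ by analyzing minimal vertex covers, then upgrade unmixedness to the Cohen-Macaulay property via a depth argument. The combinatorial input comes from Proposition \ref{Faridi}, which identifies minimal primes of $I(\Delta')$ with minimal vertex covers of $\Delta'$.

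For unmixedness, let $A$ be any minimal vertex cover of $\Delta'$. Because the added facets $F_1, \ldots, F_n$ are pairwise disjoint, covering $F_i$ requires at least one vertex of $A \cap F_i$, and minimality forces $|A \cap F_i| = 1$ for every $i$: any surplus vertex of $F_i$ that is ``new'' (outside $V(\Delta)$) could be removed without losing the cover. Partitioning the indices as $T = \{i : x_i \in A\}$ and $S = \{1,\dots,n\} \setminus T$, one can write $A = \{x_i : i \in T\} \cup \{y_i : i \in S\}$ with $y_i \in F_i \setminus \{x_i\}$; the condition that facets of $\Delta$ are covered becomes the condition that $\{x_i : i \in T\}$ is a vertex cover of $\Delta$. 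In every case $|A| = |T| + |S| = n$, so $I(\Delta')$ is unmixed and $\hgt I(\Delta') = \bight I(\Delta') = n$.

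For Cohen-Macaulayness, I would induct on $n$. Choose a new vertex $y \in F_n \setminus \{x_n\}$, which exists because $\dim F_n \geq 1$, and examine the short exact sequence
$$0 \to R/(I(\Delta') : y) \to R/I(\Delta') \to R/(I(\Delta'), y) \to 0.$$
The quotient ideal $(I(\Delta'), y)$ is the sum of $(y)$ with the facet ideal of the complex obtained from $\Delta'$ by deleting $y$; this deletion only shrinks $F_n$ (which still has $\dim \geq 1$ if $|F_n| > 2$, or collapses to the whisker case $F_n = \{x_n\}$ otherwise), so after a minor generalization of the statement to allow some whisker facets to have dimension $0$, the inductive hypothesis applies. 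The colon ideal $(I(\Delta') : y)$ simplifies similarly: $y$ appears only in the generator $\prod F_n$, so $(I(\Delta') : y)$ is generated by $\prod (F_n \setminus \{y\})$ together with the facet ideals of the remaining configuration, which again matches a configuration amenable to induction. The depth lemma then yields $\mathrm{depth}(R/I(\Delta')) \geq n$, and combined with $\mathrm{depth}(R/I(\Delta')) \leq \dim(R/I(\Delta')) = \dim R - n$ in the appropriate polynomial ring, one gets equality.

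The main obstacle will be formulating the inductive hypothesis broadly enough to absorb the auxiliary complexes produced by the colon and quotient operations, since these are no longer of the exact form $\Delta'$ prescribed in the theorem: the whisker over $x_n$ shrinks on one side of the short exact sequence and is essentially deleted on the other. A cleaner alternative, which I would attempt in parallel, is to prove pure shellability of the Stanley-Reisner complex whose facets correspond to the minimal vertex covers enumerated above: order these facets first by the set $T$ (refining a shelling of the vertex-cover complex of $\Delta$), then lexicographically by the choice of $y_i \in F_i \setminus \{x_i\}$ for $i \notin T$. The disjointness of the $F_i$'s makes the verification of the shelling condition essentially local to each $F_i$, which is where the hypothesis $\dim F_i \geq 1$ plays its role.
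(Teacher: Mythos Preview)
The paper does not give its own proof of this theorem: it is quoted verbatim as \cite{F05}, Theorem 8.2, and used only as background (see Remark~\ref{Faridi2} and the opening of the proof of Proposition~\ref{P2.whiskersimplcomplex}). So there is no argument in the paper to compare your proposal against.

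On the merits of your sketch: the unmixedness part is clean and correct. Since $V(\Delta')=\bigcup_i F_i$ and the $F_i$ are pairwise disjoint, every minimal vertex cover meets each $F_i$ in exactly one point, giving $|A|=n$; this is exactly the content of Remark~\ref{Faridi2}.

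The Cohen--Macaulay part, however, is not yet a proof. Two concrete issues:
\begin{itemize}
\item The depth bookkeeping is wrong. You write that the depth lemma gives $\mathrm{depth}\,R/I(\Delta')\ge n$ and then combine this with $\mathrm{depth}\le \dim R - n$ to ``get equality''. But $\dim R$ is $n+\sum_i(|F_i|-1)$, typically much larger than $2n$, so the two inequalities do not meet. What you need from the short exact sequence is $\mathrm{depth}\,R/I(\Delta')\ge \dim R - n$, and for that both flanking modules must themselves be Cohen--Macaulay of the correct dimension, which is exactly what has to be checked.
\item Your induction variable is $n=|V(\Delta)|$, but neither $(I(\Delta'),y)$ nor $(I(\Delta'):y)$ decreases $n$: both operations only shrink the leaf facet $F_n$. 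So the induction as stated does not move. You yourself flag this (``formulating the inductive hypothesis broadly enough''), but allowing $\dim F_i=0$ does not help either, since a $0$-dimensional $F_i=\{x_i\}$ is not a facet of $\Delta'$ unless $x_i$ is isolated in $\Delta$. A workable fix is to induct on $\sum_i(|F_i|-1)$ and to phrase the statement for Faridi's more general \emph{grafted} complexes, which are stable under these link/deletion moves; that is essentially how \cite{F05} proceeds.
\end{itemize}

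Your alternative via shellability of the vertex-cover complex is the more promising route and, once you write it down carefully, is self-contained: with the $F_i$ disjoint, the facets are exactly the complements of the sets $A$ you described, and ordering them by $T$ and then lexicographically in the $y_i$'s does give a shelling.
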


\begin{rem} \label{Faridi2}
We have that $\hgt I(\Delta') = n$ because $I(\Delta')$ is pure (see Bruns, Herzog \cite{BH93}, Cor. 5.1.5) and, in view of Proposition \ref{Faridi}, the ideal generated by the vertices of $\Delta$ is a minimal prime ideal of $I(\Delta')$.
\end{rem}

\section{The arithmetical rank of the edge ideals of graphs with whiskers}

In this section we prove the equality between the arithmetical rank and the (big) height for the edge ideals of some classes of graphs obtained by adding one or more whiskers to every vertex of a given graph. We provide explicit formulas and examine the special case where the graph is a cycle.

\begin{prop} \label{P1.whiskergraph}
Let $G$ be a graph on the vertex set $V(G)=\{ x_1,\dots,x_n \}$. Consider a partition $\{W_1,\dots,W_t\}$ of $V(G)$. For all $i=1,\dots,t$, and for every $x_j \in W_i$ add a new vertex $y_i$ and the whisker $x_j y_i$. Let $G'$ be the graph obtained in this way. Then
$$
\bight I(G') = \ara I(G') = n.
$$
In particular, every fully clique-whiskered graph is a set-theoretic complete intersection.
\end{prop}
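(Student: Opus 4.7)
The plan is to prove $\bight I(G') = n$ and $\ara I(G') \leq n$ separately; combined with the general chain $\bight \leq \ara$ these give the equalities, and the corollary follows at once.

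\textbf{Big height.} By Proposition~\ref{Faridi} the minimal primes of $I(G')$ correspond bijectively to minimal vertex covers of $G'$. The set $V(G) = \{x_1, \dots, x_n\}$ is a vertex cover of $G'$ — every edge of $G$ lies inside $V(G)$ and every whisker $x_j y_i$ is covered by $x_j$ — and it is minimal: removing any $x_j$ uncovers the whisker $x_j y_{\sigma(j)}$, where $\sigma(j)$ denotes the index with $x_j \in W_{\sigma(j)}$. So $\bight I(G') \geq n$. For the opposite inequality let $S$ be any minimal vertex cover and fix $i$. If $y_i \in S$, minimality forces some $x_j \in W_i$ to lie outside $S$ (otherwise $y_i$ could be removed), so $|S \cap (W_i \cup \{y_i\})| \leq |W_i|$; if $y_i \notin S$ then all of $W_i$ must lie in $S$ to cover the whiskers $x_j y_i$, again giving $|S \cap (W_i \cup \{y_i\})| = |W_i|$. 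Summing over $i$, $|S| \leq \sum_i |W_i| = n$.

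\textbf{Arithmetical rank.} To show $\ara I(G') \leq n$, I apply Lemma~\ref{SchmittVogel} with $n$ subsets $P_0, \dots, P_{n-1}$ of the generating set of $I(G')$ chosen so that condition~(iii) holds. The scheme orders the vertices compatibly with the partition $\{W_i\}$, assigns to each $P_{j-1}$ the whisker $x_j y_{\sigma(j)}$ together with a small selection of edges of $G$, and places a ``central'' edge of $G$ as the single element of $P_0$. Condition~(iii) is then verified pair by pair: (a) two whiskers $x_j y_i, x_k y_{i'}$ lying in the same $P_i$ have product $x_j x_k y_i y_{i'}$ divisible by the edge $x_j x_k$ placed earlier; (b) a whisker $x_j y_{\sigma(j)}$ paired with an edge $x_k x_j$ is handled by the whisker $x_k y_{\sigma(k)}$ when $\sigma(k) = \sigma(j)$, and otherwise by a bridging edge put in an earlier subset; (c) two edges with a common vertex have their product divided by the corresponding ``third side'' edge when it is present. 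The resulting $n$ polynomials $q_0, \dots, q_{n-1}$ from Schmitt-Vogel generate $I(G')$ up to radical, so $\ara I(G') \leq n$.

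\textbf{Corollary and main obstacle.} Combining both bounds gives $\bight I(G') = \ara I(G') = n$. For the final assertion, if $G = G^\pi$ is fully clique-whiskered, Remark~\ref{CookNagel} yields $\hgt I(G^\pi) = n$, hence $\hgt I(G^\pi) = \ara I(G^\pi)$ and $I(G^\pi)$ is a set-theoretic complete intersection. The chief technical difficulty is the Schmitt-Vogel construction in the general case: when the partition is not into cliques, or when $G$ contains cycles or cross-class edges, the naive assignment of generators to subsets fails condition~(iii), because the ``diagonal'' edge needed to divide a product may not be present in $E(G)$. A careful ordering of the vertices, a judicious choice of the single element of $P_0$, and exploitation of the fact that every $x_j$ carries a whisker — so that every pair of generators has a ready substitute divisor available from an earlier level — are what make the combinatorial placement possible. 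A case split on whether pairs come from the same partition class or not, and possibly an induction on $n$ or on the number of partition classes, is expected to be the heart of the argument.
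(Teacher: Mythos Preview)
Your big height computation is fine (indeed sharper than needed: the paper only proves $\bight \geq n$ and closes the chain via $\ara \leq n$). The gap is in the arithmetical rank bound. You propose Schmitt--Vogel but never actually construct sets $P_0,\dots,P_{n-1}$ satisfying condition~(iii); the ``scheme'' is only described in outline, and the verifications (a)--(c) fail as stated. In~(a), for instance, the product $x_j y_i \cdot x_k y_{i'}$ must be divisible by an element already placed in some earlier $P_{i'}$; you propose the edge $x_j x_k$, but $x_j x_k$ need not be an edge of $G$ at all. Similar issues affect (b) and (c). You diagnose this yourself (``the naive assignment\dots fails condition~(iii)'') and conclude that an unspecified case split and ``possibly an induction\dots is expected to be the heart of the argument'' --- an explicit admission that the proof is not carried out. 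Moreover, if $E(G)=\varnothing$ there is no ``central edge'' for $P_0$, so the scheme is not even well-defined in that case.

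The paper bypasses these combinatorics entirely by bounding $\ara$ through Kimura's theorem (Theorem~\ref{Kimura}) on the Lyubeznik $L$-length rather than Schmitt--Vogel. One orders the generators so that, among monomials divisible by $x_j$, the whisker $x_j y_{\sigma(j)}$ comes last, and proves by induction on $n$ that every $L$-admissible symbol has length at most $n$: if $r$ monomials of the symbol contain $x_1$, admissibility forces the remaining monomials to avoid at least $r-1$ of the other $x$-variables, so they form an admissible symbol for a graph of the same type on at most $n-r$ vertices. This yields $\ara I(G') \leq n$ without any explicit generators-up-to-radical construction, and the combinatorial placement problem you identify simply does not arise.
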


\begin{proof}
Suppose that $W_1 = \{ x_1,\dots,x_{m_1} \}$, $W_2 = \{ x_{m_1+1},\dots,x_{m_2} \},\dots$, $W_t = \{ x_{m_{t-1}+1},\dots,x_{m_t} = x_n \}$. First we show that $\bight I(G') \geq n$. The set $A = \{ x_1,\dots,x_n \}$ is a minimal vertex cover since one of the vertices of each edge of $G'$ belongs to $A$. The vertex cover $A$ is minimal because removing a vertex $x_j \in W_i$, for some $i,j$, would leave the whisker $x_j y_i$ uncovered. Hence $\bight I(G') \geq n$.
\\ Next, we prove that $\ara I(G') \leq n$. Let us consider the following ordering of the quadratic monomials:
\begin{center}
\begin{tabular}[c]{cccccc}
$x_1x_2$ & $x_1x_3$ & $\cdots$                      & $\cdots$ & $x_1x_n$            & $x_1y_1$ \\
         & $\ddots$ &                               &          & $\vdots$            & $\vdots$ \\
         &          & $x_{m_1} x_{m_1+1}$           & $\cdots$ & $x_{m_1} x_n$       & $x_{m_1} y_1$ \\
         &          & $x_{m_1+1} x_{m_1+2}$         & $\cdots$ & $x_{m_1+1} x_n$     & $x_{m_1+1} y_2$ \\
         &          & $\vdots$                      &          & $\vdots$            & $\vdots$ \\
         &          & $x_{m_{t-1}+1} x_{m_{t-1}+2}$ & $\cdots$ & $x_{m_{t-1}+1} x_n$ & $x_{m_{t-1}+1} y_t$ \\
         &          &                               &          & $\ddots$            & $\vdots$ \\
         &          &                               &          &                     & $x_ny_t$
\end{tabular}
\end{center}
and arrange the generators of $I(G')$ according to the induced ordering. Let $u$ be an admissible symbol for $I(G')$. We want to show, by induction on $n$, that $u$ has length at most $n$. The claim is true for $n=2$, because the symbol consisting of $x_1x_2, x_1y_1, x_2y_i$ is not admissible if $i=1$ or $i=2$. Let $n>2$ and suppose that there are exactly $r$ monomials in $u$ containing the variable $x_1$ (these are monomials appearing in the first row of the above table), and precisely
$$
x_1x_{i_1}, x_1x_{i_2}, \dots, x_1x_{i_r}, \text{\qquad or \qquad} x_1x_{i_1}, x_1x_{i_2}, \dots, x_1x_{i_{r-1}}, x_1y_1.
$$
In both cases, every other monomial in $u$ cannot contain any of the variables $x_{i_1}, x_{i_2}, \dots, x_{i_{r-1}}$, because $x_1x_{i_j}$ divides $x_1x_{i_r}x_{i_j}$ and $x_1y_1x_{i_j}$ for all $j = 1,\dots,r-1$. Thus, the remaining monomials in $u$ form an admissible symbol for a graph of the same type of $G'$ on a vertex set $W \subset \left(\{x_2,\dots,x_n\} \smallsetminus \{x_{i_1},\dots,x_{i_{r-1}}\}\right) \cup \{y_1,\dots,y_t\}$. By induction, this symbol has length at most $n-1-(r-1)=n-r$. Therefore $u$ has length at most $r+n-r=n$. It follows, from Theorem \ref{Kimura}, that $\ara I(G') \leq n$. Hence $\bight I(G') = \ara I(G') = n$.
\\ The second part of the statement follows from Remark \ref{CookNagel}.
\end{proof}

If $G$ is the whisker graph on a cycle graph $C_n$, using Lemma \ref{SchmittVogel} we can find $n$ polynomials that generate $I(G)$ up to radical and whose expressions are simpler than those obtained using the technique due to Kimura (compare what follows with the proof of Theorem 1 in \cite{Ki09}).
\\ Given a cycle $C_n$ on the vertex set $V(C_n)=\{x_1,\dots,x_n\}$, we consider the $n$-sunlet graph $S_n$ on $C_n$, obtained by adding to each vertex $x_i$ of $C_n$ a whisker, whose terminal vertex is $y_i$, for all $i=1,\dots,n$.

\begin{ex} \label{sunlet}
For each $n \in \N$, $n \geq 3$, the edge ideal of the $n$-sunlet graph $S_n$ is a set-theoretic complete intersection, namely
$$
\ara I(S_n) = \hgt I(S_n) = |V(C_n)| = n.
$$
We distinguish the following cases.
\\ If $n=3$, consider the following sums of monomials
\begin{align*}
q_0 &= x_1 x_2 \\
q_1 &= x_1 x_3 + x_2 x_3 \\
q_2 &= x_1 y_1 + x_2 y_2 + x_3 y_3.
\end{align*}
If $n=4$, set
\begin{align*}
q_0 &= x_1 x_2 \\
q_1 &= x_1 x_4 + x_2 x_3 \\
q_2 &= x_1 y_1 + x_2 y_2 + x_3 x_4 \\
q_3 &= x_3 y_3 + x_4 y_4.
\end{align*}
Finally, for $n=5$, set
\begin{align*}
q_0 &= x_1 x_2 \\
q_1 &= x_1 x_5 + x_2 x_3 \\
q_2 &= x_1 y_1 + x_4 x_5 \\
q_3 &= x_2 y_2 + x_3 x_4 + x_3 y_3 x_5 y_5 \\
q_4 &= x_3 y_3 + x_4 y_4 + x_5 y_5.
\end{align*}
Now suppose that $n \geq 6$. In this case set
\begin{align*}
q_0 &= x_1 x_2 \\
q_1 &= x_1 x_n + x_2 x_3 \\
q_2 &= x_2 y_2 + x_3 x_4 \\
\vdots & %\\
\end{align*}
\begin{align*}
q_{n-4} &= x_{n-4} y_{n-4} + x_{n-3} x_{n-2} \\
q_{n-3} &= x_1 y_1 + x_{n-1} x_n \\
q_{n-2} &= x_{n-3} y_{n-3} + x_{n-2} x_{n-1} + x_{n-2} y_{n-2} x_n y_n \\
q_{n-1} &= x_{n-2} y_{n-2} + x_{n-1} y_{n-1} + x_n y_n.
\end{align*}
\\ Then, in any case, we have $I(S_n) = \sqrt{(q_0,\dots,q_{n-1})}$ by Lemma \ref{SchmittVogel}. We show that its assumptions are fulfilled by the sets $P_0,\dots,P_{n-1}$, where, for all $i=0,\dots,n-1$, $P_i$ is the set of monomials appearing in $q_i$. It is straightforward to verify that conditions $\textit{(i)}$ and $\textit{(ii)}$ are satisfied. Evidently condition $\textit{(iii)}$ is true if $n \in \{3,4,5\}$. We prove it for $n \geq 6$. The product of the monomials in $P_1$ is $x_1 x_n \cdot x_2 x_3$, which is a multiple of $x_1 x_2 \in P_0$. For $i=2,\dots,n-4$, the product of the monomials of $P_i$ is $x_i y_i \cdot x_{i+1} x_{i+2}$, which is a multiple of $x_i x_{i+1} \in P_{i-1}$. The product of the monomials of $P_{n-3}$ is $x_1 y_1 \cdot x_{n-1} x_n$, a multiple of $x_1 x_n \in P_1$. In $P_{n-2}$, we can form three products: $x_{n-3} y_{n-3} \cdot x_{n-2} x_{n-1}$ and $x_{n-3} y_{n-3} \cdot x_{n-2} y_{n-2} x_n y_n$, which are multiples of $x_{n-3} x_{n-2} \in P_{n-4}$, and $x_{n-2} x_{n-1} \cdot x_{n-2} y_{n-2} x_n y_n$, which is a multiple of $x_{n-1} x_n \in P_{n-3}$. As for $P_{n-1}$, we have $x_{n-2} y_{n-2} \cdot x_{n-1} y_{n-1}$, which is a multiple of $x_{n-2} x_{n-1} \in P_{n-2}$, $x_{n-2} y_{n-2} \cdot x_n y_n$, which is an element of $P_{n-2}$, and $x_{n-1} y_{n-1} \cdot x_n y_n$ which is a multiple of $x_{n-1} x_n \in P_{n-3}$. This completes the proof.
\end{ex}

Example \ref{sunlet} can be generalized as follows.

\begin{ex} \label{multiwhisker}
Let $C_n$ be a cycle graph with vertex set $\{ x_1,\dots,x_n \}$ and add $k$ whiskers to each vertex of $C_n$. Let $G$ be the graph obtained in this way and $y_{i,1},\dots,y_{i,k}$ be the terminal vertices of the whiskers on $x_i$. Then
$$
\bight I(G) = \ara I(G) = \left\lceil \frac{n}{2} \right\rceil + \left\lfloor \frac{n}{2} \right\rfloor k.
$$
First we define a minimal vertex cover of $G$. We choose the vertices
$$
x_{2j-1} \text{ for all } j=1,\dots, \left\lceil \frac{n}{2} \right\rceil \text{ and } y_{2j,1},\dots,y_{2j,k} \text{ for all } j=1,\dots, \left\lfloor \frac{n}{2} \right\rfloor.
$$
Thus
$$
\bight I(G) \geq \left\lceil \frac{n}{2} \right\rceil + \left\lfloor \frac{n}{2} \right\rfloor k.
$$
Now we define the same number of polynomials generating $I(G)$ up to radical. We distinguish three cases:
\begin{itemize}
\item if $n=3$, we set
    \begin{align*}
    q_0 &= x_1 x_2 \\
    q_1 &= x_1 x_3 + x_2 x_3 \\
    q_{j+1} &= x_1 y_{1,j} + x_2 y_{2,j} + x_3 y_{3,j}
    \end{align*}
    for $j \in \{ 1,\dots,k \}$. Thus, by Lemma \ref{SchmittVogel}, $\ara I(G) \leq 2 + k = \left\lceil \frac{3}{2} \right\rceil + \left\lfloor \frac{3}{2} \right\rfloor k$.
\item if $n$ is even, let $q_0,\dots,q_{n-1}$ be as in Example \ref{sunlet}, with $y_j$ replaced by $y_{j,1}$ for all $j=1,\dots,n$. Then set
    \begin{align*}
    q_{n+\frac{n}{2} (j-2)} &= x_1 y_{1,j} + x_2 y_{2,j} \\
    q_{n+\frac{n}{2} (j-2)+1} &= x_3 y_{3,j} + x_4 y_{4,j} \\
    \vdots & \\
    q_{n+\frac{n}{2} (j-2)+ \frac{n}{2}-1} &= x_{n-1} y_{n-1,j} + x_n y_{n,j}
    \end{align*}
    for $j \in \{ 2,\dots,k \}$. According to Lemma \ref{SchmittVogel}, the polynomials $q_0,\dots,q_{n+\frac{n}{2}(k-1)-1}$ generate $I(G)$ up to radical. Hence $\ara I(G) \leq n + \frac{n}{2} (k-1) = \left\lceil \frac{n}{2} \right\rceil + \left\lfloor \frac{n}{2} \right\rfloor k$.
\item if $n$ is odd, let $q_0,\dots,q_{n-1}$ be as in Example \ref{sunlet}, with $y_j$ replaced by $y_{j,1}$ for all $j=1,\dots,n$. Then set
    \begin{align*}
    q_{n + \left\lfloor \frac{n}{2} \right\rfloor (j-2)} &= x_1 y_{1,j} + x_2 y_{2,j} + x_n y_{n,j} x_3 y_{3,j} \\
    q_{n + \left\lfloor \frac{n}{2} \right\rfloor (j-2)+1} &= x_3 y_{3,j} + x_4 y_{4,j} + x_n y_{n,j} x_5 y_{5,j} \\
    \vdots & \\
    q_{n + \left\lfloor \frac{n}{2} \right\rfloor (j-2)+ \left\lfloor \frac{n}{2} \right\rfloor-2} &= x_{n-4} y_{n-4,j} + x_{n-3} y_{n-3,j} + x_n y_{n,j} x_{n-2} y_{n-2,j} \\
    q_{n + \left\lfloor \frac{n}{2} \right\rfloor (j-2)+ \left\lfloor \frac{n}{2} \right\rfloor-1} &= x_{n-2} y_{n-2,j} + x_{n-1} y_{n-1,j} + x_n y_{n,j}
    \end{align*}
    for $j \in \{ 2,\dots,k \}$. By Lemma \ref{SchmittVogel}, $\ara I(G) \leq n + \left\lfloor \frac{n}{2} \right\rfloor (k-1) = \left\lceil \frac{n}{2} \right\rceil + \left\lfloor \frac{n}{2} \right\rfloor k$.
\end{itemize}
Therefore
\[
\bight I(G) = \ara I(G) = \left\lceil \frac{n}{2} \right\rceil + \left\lfloor \frac{n}{2} \right\rfloor k.\qedhere
\]
\end{ex}

\section{The arithmetical rank of some facet ideals}

In this section we prove the equality between the arithmetical rank and the (big) height for the facet ideals of some classes of simplicial complexes obtained by adding one or more facets to every vertex of a given simplicial complex. Again, we provide explicit formulas. The simplicial complexes considered in the following proposition are a special case of the so-called grafted simplicial complexes considered by Faridi in \cite{F05}.

\begin{prop} \label{P2.whiskersimplcomplex}
Let $\Delta$ be a simplicial complex on the vertex set $V(\Delta) = \{ x_1,\dots,x_n \}$. For every vertex $x_i$, we add a facet $F_i$ of dimension $\geq 1$ such that
\begin{itemize}
\item $F_i \cap V(\Delta) = \{ x_i \}$, for every $i=1,\dots,n$,
\item $F_i \cap F_j = \varnothing$ if $i \neq j$, for every $i,j=1,\dots,n$.
\end{itemize}
Let $\Delta'$ be the simplicial complex obtained in this way. Then
$$
\hgt I(\Delta') = \ara I(\Delta') = n,
$$
thus $I(\Delta')$ is a set-theoretic complete intersection.
\end{prop}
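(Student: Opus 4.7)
The plan is to combine Remark~\ref{Faridi2}, which already gives $\hgt I(\Delta')=n$, with Theorem~\ref{Kimura}, which will yield the matching upper bound $\ara I(\Delta')\leq n$. Since $\hgt I\leq\ara I$ is automatic, the entire statement reduces to producing a Lyubeznik resolution of $I(\Delta')$ of length at most $n$, which amounts to a careful choice of order on the minimal generators.

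The generators of $I(\Delta')$ are the monomials $x_G=\prod_{v\in G}v$ for each facet $G$ of $\Delta$ with $|G|\geq 2$ (these remain facets of $\Delta'$ because $F_i\cap V(\Delta)=\{x_i\}$ forces $G\not\subseteq F_i$ whenever $|G|\geq 2$), together with $m_1,\dots,m_n$, where $m_i=\prod_{v\in F_i}v$. I would order them by first listing all the $x_G$'s, in any order, and then listing $m_1,m_2,\dots,m_n$ in this order. The essential feature of this ordering is that, once $m_k$ appears, every later generator is some $m_{k'}$ with $k'>k$ and hence, since $F_{k'}\cap V(\Delta)=\{x_{k'}\}$ and $F_k\cap F_{k'}=\varnothing$, does not involve the vertex $x_k$.

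The main combinatorial claim is that any $L$-admissible symbol $u=(u_{i_1},\dots,u_{i_s})$ for this ordering satisfies $s\leq n$. I would prove it by assigning to each $j\in\{1,\dots,s\}$ a vertex $v_j\in V(\Delta)$ appearing in the variables of $u_{i_j}$, in such a way that $v_1,\dots,v_s$ are pairwise distinct. If $u_{i_j}=x_{G_j}$ is a facet of $\Delta$, then for $j<s$ admissibility applied with $t=j+1$ and $q=i_j$ gives $u_{i_j}\nmid\mathrm{lcm}(u_{i_{j+1}},\dots,u_{i_s})$, so some $v_j\in G_j\subseteq V(\Delta)$ is absent from this lcm; for $j=s$ any $v_s\in G_s$ works. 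If $u_{i_j}=m_k$, I set $v_j=x_k$, and by the choice of ordering $x_k$ does not appear in any later generator at all. Distinctness is then automatic: for $j<s$ the vertex $v_j$ does not occur in $u_{i_l}$ for any $l>j$, hence $v_j\neq v_l$; and for any $l<s$, $v_l$ does not occur in $u_{i_s}$ while $v_s$ does, hence $v_l\neq v_s$. Therefore $s\leq|V(\Delta)|=n$, so the $L$-length is at most $n$, and Theorem~\ref{Kimura} finishes the proof.

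The main obstacle I anticipate is precisely the choice of ordering. With a naive interleaving of the $x_G$'s and the $m_i$'s one can in fact construct admissible symbols of length strictly greater than $n$, since then a later generator may contain some $x_k$ and the argument in the $m_k$-case breaks down. Placing every facet of $\Delta$ strictly before all of the whisker monomials $m_i$ is exactly what rescues that case and makes $v_j=x_k$ a legitimate, distinct signature.
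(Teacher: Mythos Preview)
Your argument is correct and, like the paper, hinges on Remark~\ref{Faridi2} for the lower bound and on Theorem~\ref{Kimura} for the upper bound, but the two proofs diverge in how they bound the length of an $L$-admissible symbol. The paper orders the generators lexicographically with $x_1<\cdots<x_n<y^{(1)}_1<\cdots$, so that $u_{F_i}$ sits at the end of the $i$-th ``row'' (whisker monomials are thus interleaved among the $\Delta$-facets), and then argues by induction on $n$: it isolates the generators containing $x_1$, uses admissibility to extract $r-1$ pairwise distinct vertices via the auxiliary sets $V_i\smallsetminus W_i$, and reduces to a complex on $n-r$ vertices. You instead push all whisker monomials to the very end and give a one-shot injection $j\mapsto v_j\in V(\Delta)$, which sidesteps both the induction and the $V_i,W_i$ bookkeeping. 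Your route is shorter; the paper's has the mild advantage that its ordering is the one reused in Corollary~\ref{multifacet}. One small correction to your closing remark: the paper's interleaved lexicographic order in fact also has the property that no generator after $u_{F_k}$ contains $x_k$ (since every later generator has least variable $x_l$ with $l>k$), so your direct-assignment argument would go through for that order as well; it is only \emph{arbitrary} interleavings that can break it.
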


\begin{proof}
From Remark \ref{Faridi2}, it follows that $\hgt I(\Delta') = n$. we show that $\ara I(\Delta') \leq n$ by proving that the $L$-length of $I(\Delta')$ is at most $n$. For every facet $F_i$, we denote by $u_{F_i}$ the monomial corresponding to $F_i$ and set $F_i = \left\{ x_i, y^{(i)}_1, \dots, y^{(i)}_{p_i} \right\}$. Let us consider the lexicographic ordering of the monomial generators of $I(\Delta')$, with $x_1 < x_2 < \cdots < x_n < y^{(1)}_1 < \cdots < y^{(1)}_{p_1} < \cdots < y^{(n)}_1 < \cdots < y^{(n)}_{p_n}$:
\begin{center}
\begin{tabular}[c]{ccccc}
$u_{1,1}$ & $u_{1,2}$ & $\cdots$ & $u_{1,s_1}$  & $u_{F_1}$ \\
          & $u_{2,1}$ & $\cdots$ & $u_{2,s_2}$  & $u_{F_2}$ \\
          &           &          & $\ddots$     & $\vdots$ \\
          &           &          &              & $u_{F_n}$
\end{tabular}
\end{center}
Let $\alpha$ be an admissible symbol for $\Delta'$ consisting of the monomials $\alpha_1,\dots,\alpha_m$, for some $m \geq 1$. We want to show, by induction on $n$, that $\alpha$ has length at most $n$. The claim is true for $n=2$, because the symbol consisting of $x_1x_2, u_{F_1}, u_{F_2}$ is not admissible. Let $n>2$ and suppose that there are exactly $r$ monomials in $\alpha$ containing the variable $x_1$ (these are monomials appearing in the first row of the above table). If $r=0$, then $\alpha$ is an admissible symbol on the vertex set $\{ x_2,\dots,x_n \} \cup \left\{y^{(i)}_1, \dots, y^{(i)}_{p_i}\ \big|\ i=2,\dots,n \right\}$, so the claim follows by induction. If $r=1$, then the monomials $\alpha_2, \dots, \alpha_m$ form an admissible symbol $\beta$ on the vertex set $\{ x_2,\dots,x_n \} \cup \left\{y^{(i)}_1, \dots, y^{(i)}_{p_i}\ \big|\ i=2,\dots,n \right\}$ and, by induction, $|\beta| \leq n-1$. Hence $|\alpha| = 1 + |\beta| \leq 1+n-1 = n$. Now suppose that $r \geq 2$ and that the monomials of $\alpha$ containing the variable $x_1$ are precisely
$$
\alpha_1=u_{1,i_1}, \alpha_2=u_{1,i_2}, \dots, \alpha_{r-1}=u_{1,i_{r-1}}, \text{ and } \alpha_r=u_{1,i_r} \text{ or } \alpha_r=u_{F_1}.
$$
We set $V_i = \{ x_j\ |\ x_j \text{ divides } \alpha_i,\ x_j \neq x_1 \}$ for every $i=1,\dots,r$ (if $\alpha_r = u_{F_1}$, then $V_r = \varnothing$). Consider, for every $i = 1,\dots, r-1$, the set $V_i \smallsetminus W_i$, where $W_i = V_{i+1} \cup \cdots \cup V_r$.
\\ The following two properties hold:
\begin{itemize}
\item[1)] $V_i \smallsetminus W_i \neq \varnothing$, for every $i = 1,\dots,r-1$,
\item[2)] $(V_i \smallsetminus W_i) \cap V_j = \varnothing$, for every $1 \leq i < j \leq r-1$.
\end{itemize}
For the first property, suppose for a contradiction that $V_i \smallsetminus W_i = \varnothing$ for some $i$. Then $V_i \subset W_i = V_{i+1} \cup \cdots \cup V_r$ and this implies that $\alpha_i$ divides ${\rm lcm}(\alpha_{i+1},\dots,\alpha_r)$, against the assumption that $\alpha$ is admissible.
\\ The second property is true because $V_j \subset W_i$ if $1 \leq i < j \leq r-1$.
\\ Note that the indeterminates in $V_i \cap W_i$ appear both in $\alpha_i$ and in some of the monomials $\alpha_{i+1},\dots,\alpha_r$. Therefore, for the symbol $\alpha$ to be admissible, at least one indeterminate in $V_i \smallsetminus W_i$ must not appear in the monomials $\alpha_{r+1},\dots,\alpha_m$ (otherwise $\alpha_i$ divides ${\rm lcm}(\alpha_{i+1},\dots,\alpha_m)$). By virtue of 2), for every $j = 1,\dots,r-1$, we can thus choose an indeterminate $x_{i_j}$ that appears in $\alpha_j$ and does not appear in the monomials $\alpha_{r+1},\dots,\alpha_m$, in such a way that $x_{i_1},\dots,x_{i_{r-1}}$ are pairwise distinct. It follows that the monomials $\alpha_{r+1},\dots,\alpha_m$ form an admissible symbol $\beta$ on the vertex set $\left(\{ x_2,\dots,x_n\} \smallsetminus \{ x_{i_1},\dots,x_{i_{r-1}} \}\right) \cup \left\{y^{(i)}_1, \dots, y^{(i)}_{p_i}\ \big|\ i \in \{2,\dots,n\} \smallsetminus \{ i_1,\dots,i_{r-1} \} \right\}$. By induction, $|\beta| \leq n-1 - (r-1) = n-r$. Then $|\alpha| = r + |\beta| \leq r+n-r = n$.
\end{proof}

\begin{cor} \label{multifacet}
Let $\Delta$ be a simplicial complex on the vertex set $\{ x_1,\dots,x_n \}$. For every vertex $x_i$, we add $m_i \geq 1$ facets $F_{i,1},\dots, F_{i,m_i}$ of dimension $\geq 1$ such that
\begin{itemize}
\item $F_{i,j} \cap V(\Delta) = \{ x_i \}$, for every $i=1,\dots,n$ and $j=1,\dots,m_i$,
\item $F_{i,j} \cap F_{i,k} = \{ x_i \}$, for every $i=1,\dots,n$ and $j,k=1,\dots,m_i$, $j \neq k$,
\item $F_{i,j} \cap F_{h,k} = \varnothing$ if $i \neq h$, for every $i,h=1,\dots,n$, $j=1,\dots,m_i$ and $k=1,\dots,m_h$.
\end{itemize}
Call $\Delta'$ the simplicial complex obtained in this way. Then
$$
\bight I(\Delta') = \ara I(\Delta').
$$
\end{cor}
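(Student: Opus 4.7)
The strategy is to show $\ara I(\Delta') \leq \bight I(\Delta')$, since the reverse inequality is standard. To determine $\bight I(\Delta')$ I would apply Proposition \ref{Faridi}: a minimal vertex cover $A$ of $\Delta'$ decomposes as $A = S \sqcup T$ with $S \subseteq V(\Delta)$ and $T$ among the auxiliary vertices, and since each auxiliary vertex appears in a unique $F_{i,j}$, a short minimality analysis shows that $S$ must be a vertex cover of the subcomplex $\Delta^{(\geq 1)}$ of positive-dimensional faces of $\Delta$ and, for each $x_i \notin S$, $T$ contains exactly one vertex from each $F_{i,j}\smallsetminus\{x_i\}$. Setting $U = V(\Delta)\smallsetminus S$ this gives
\[
\bight I(\Delta') = n + \max_U \sum_{i \in U}(m_i - 1),
\]
where $U$ ranges over the subsets of $V(\Delta)$ that contain no facet of $\Delta$ of dimension at least one.

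For the bound on $\ara$ I would invoke Theorem \ref{Kimura} and bound the $L$-length using the lexicographic order extending the one in Proposition \ref{P2.whiskersimplcomplex}: set $x_1 < \cdots < x_n < $ (auxiliary vertices), and within generators whose smallest $x$-variable is $x_i$, first list the facets of $\Delta$ containing $x_i$, then $u_{F_{i,1}}, u_{F_{i,2}}, \dots, u_{F_{i,m_i}}$. I would prove by induction on $n$ that every $L$-admissible symbol $\alpha = (\alpha_1, \dots, \alpha_m)$ satisfies $|\alpha| \leq \bight I(\Delta')$.

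For the inductive step, let $r$ be the number of monomials of $\alpha$ containing $x_1$ and $s$ the number of those of the form $u_{F_{1,j}}$. The variable-tracking argument from the proof of Proposition \ref{P2.whiskersimplcomplex} shows that the set $M \subseteq V(\Delta)\smallsetminus\{x_1\}$ of base vertices absent from the tail $\alpha_{r+1}, \dots, \alpha_m$ satisfies $|M| \geq r - s$, and $|M| \geq r - 1$ when $s = 0$. Let $\Delta_0'$ be the whiskered complex obtained from the induced subcomplex of $\Delta$ on $V(\Delta)\smallsetminus(\{x_1\}\cup M)$ by attaching the facets $F_{i,j}$ at the remaining base vertices. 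The tail is then $L$-admissible for $I(\Delta_0')$ with the induced order, and the inductive hypothesis yields
\[
m - r \leq \bight I(\Delta_0') = (n - 1 - |M|) + \max_{U_0} \sum_{i \in U_0}(m_i - 1).
\]
When $s \geq 1$, $L$-admissibility applied at the first position of $\alpha$ occupied by some $u_{F_{1,j}}$ forces, for every facet $F$ of $\Delta$ containing $x_1$, a vertex of $F \smallsetminus \{x_1\}$ to lie in $M$; hence, for every admissible $U_0$ for $\Delta_0$, the set $U_0 \cup \{x_1\}$ is a valid $U$ for $\Delta$, so $\max_{U_0} + (m_1 - 1) \leq \max_U$. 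Combined with $s \leq m_1$ and $|M| \geq r - s$, this gives $|\alpha| \leq \bight I(\Delta')$. When $s = 0$, $U_0$ is already a valid $U$ for $\Delta$, so $\max_{U_0} \leq \max_U$, and $|M| \geq r - 1$ closes the induction.

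The principal difficulty is the case $s \geq 1$, where several whisker facets at $x_1$ may coexist in $\alpha$: the key point is that this coexistence triggers the admissibility check against every $u_F$ with $F$ a facet of $\Delta$ containing $x_1$, forcing a witness from each such $F$ into $M$ and thereby making $M$ a transversal of the facets of $\Delta$ through $x_1$. This covering property is precisely what lets the inductive bound for the tail compose with the head of size $r$ to yield exactly $\bight I(\Delta')$.
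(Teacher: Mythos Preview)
Your argument is essentially correct and reaches the conclusion, but it takes a genuinely different route from the paper. The paper does \emph{not} redo the induction on $n$: instead it picks an $L$-admissible symbol $\alpha$ of \emph{maximal} length and observes that maximality forces all whiskers at a given base vertex to appear in $\alpha$ whenever one of them does. Writing $\{i_1,\dots,i_k\}$ for the base indices whose whiskers occur in $\alpha$, the paper deletes all but one whisker per vertex to get an admissible symbol $\beta$ for the single-whisker complex $\Gamma$ of Proposition~\ref{P2.whiskersimplcomplex}, whence $|\beta|\le n$; simultaneously it checks that $A=\{y^{(i_h,j)}_1:1\le h\le k,\ 1\le j\le m_{i_h}\}\cup\{x_i:i\ne i_1,\dots,i_k\}$ is a minimal vertex cover of $\Delta'$, and a one-line count gives $|\alpha|\le|A|\le\bight I(\Delta')$. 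This is shorter and never needs the explicit formula for $\bight I(\Delta')$ or the case split on how many whiskers at $x_1$ sit in $\alpha$. Your approach, by contrast, makes the combinatorics of $\bight I(\Delta')$ completely explicit and shows transparently how the transversal property of $M$ interacts with the independent-set description of the big height; it is longer but more informative about \emph{why} the bound is what it is.

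One technical point to fix: when you pass to $\Delta_0$, take the complex whose facets are precisely the facets of $\Delta$ contained in $V(\Delta)\smallsetminus(\{x_1\}\cup M)$, \emph{not} the literal induced subcomplex. The induced subcomplex can acquire new facets (proper faces of $\Delta$ that become maximal after restriction), and such a new generator $u_F$ of $I(\Delta_0')$ may well divide the lcm of a tail segment---for instance if $F=\{x_a,x_b\}$ and both $u_{F_{a,1}}$ and $u_{F_{b,1}}$ lie in the tail---so the tail need not be $L$-admissible for $I(\Delta_0')$ under your definition. With the facet-restriction instead, the generators of $I(\Delta_0')$ form a sublist of those of $I(\Delta')$, $L$-admissibility transfers at once, and the inequality $\max_{U_0}\le\max_U$ (respectively $\max_{U_0}+(m_1-1)\le\max_U$) that you use still holds, so the rest of your argument goes through unchanged.
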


\begin{proof}
For every facet $F_{i,j}$, we denote by $u_{F_{i,j}}$ the monomial corresponding to $F_{i,j}$ and suppose that\break$F_{i,j} = \left\{ x_i, y^{(i,j)}_1, \dots, y^{(i,j)}_{p_{(i,j)}} \right\}$. Let us consider the lexicographic ordering of the monomial generators of $I(\Delta')$, with
$$
x_1 < x_2 < \cdots < x_n < y^{(1,1)}_1 < \cdots < y^{(1,1)}_{p_{(1,1)}} < \cdots < y^{(1,m_1)}_1 < \cdots < y^{(1,m_1)}_{p_{(1,m_1)}} < \cdots < y^{(n,m_n)}_1 < \cdots < y^{(n,m_n)}_{p_{(n,m_n)}}:
$$
\begin{center}
\begin{tabular}[c]{ccccccc}
$u_{1,1}$ & $u_{1,2}$ & $\cdots$ & $u_{1,s_1}$  & $u_{F_{1,1}}$   & $\cdots$ & $u_{F_{1,m_1}}$ \\
          & $u_{2,1}$ & $\cdots$ & $u_{2,s_2}$  & $u_{F_{2,1}}$   & $\cdots$ & $u_{F_{2,m_2}}$ \\
          &           & $\ddots$ & $\vdots$     & $\vdots$        &          & $\vdots$ \\
          &           &          & $x_{n-1}x_n$ & $u_{F_{n-1,1}}$ & $\cdots$ & $u_{F_{n-1,m_{n-1}}}$ \\
          &           &          &              & $u_{F_{n,1}}$   & $\cdots$ & $u_{F_{n,m_n}}$
\end{tabular}
\end{center}
Let $\alpha$ be an admissible symbol for $\Delta'$ with maximal length and call $\Gamma$ the simplicial complex $\Delta \cup \langle F_{1,1} \rangle \cup \cdots \cup \langle F_{n,1} \rangle$, where $\langle F \rangle$ is the simplex spanned by $F$. Note that, if $u_{F_{i,j}} \in \alpha$, then $u_{F_{i,h}} \in \alpha$ for every $h \in \{ 1,\dots,m_i \}$, otherwise $\alpha$ does not have maximal length. In fact, if $u_{F_{i,j}} \in \alpha$, but $u_{F_{i,h}} \notin \alpha$ for some $h \neq j$, then the symbol $\beta$ obtained by adding $u_{F_{i,h}}$ to $\alpha$ is admissible and $|\beta| > |\alpha|$. Now suppose that $\alpha$ consists of the monomials:
$$
u_{h_1,j_1},\dots,u_{h_s,j_s} \text{ and } u_{F_{i_1,1}},\dots,u_{F_{i_1,m_{i_1}}}, \dots, u_{F_{i_k,1}},\dots,u_{F_{i_k,m_{i_k}}},
$$
ordered as above. We define the set
$$
A = \left\{ y^{(i_h,1)}_1,\dots,y^{(i_h,m_{i_h})}_1\ \Big|\ h=1,\dots,k \right\} \cup \left\{ x_i\ |\ i \in \{1,\dots,n \} \smallsetminus \{i_1,\dots,i_k\} \right\}.
$$
Then $A$ is a minimal vertex cover for $\Delta'$. First note that, for every $h \in \{1,\dots,n\}$, $x_h \in A$ or $y^{(i_h,1)}_1,\dots,y^{(i_h,m_{i_h})}_1 \in A$. Hence one of the vertices of each $F_{i,j}$ belongs to $A$. Moreover, if $y^{(j_1,1)}_1, \dots, y^{(j_t,1)}_1 \in A$, with $j_1,\dots,j_t$ pairwise distinct, then $x_{j_1} \cdots x_{j_t} \notin \Delta$. Otherwise, the monomial $x_{j_1} \cdots x_{j_t}$ would precede the monomials $u_{F_{(j_1,1)}}, \dots, u_{F_{(j_t,1)}}$ in the above ordering. Since $x_{j_1} \cdots x_{j_t}$ divides $u_{F_{(j_1,1)}} \cdots u_{F_{(j_t,1)}}$, this would imply that $\alpha$ is not admissible, against our assumption. This shows that one of the vertices of each facet of $\Delta$ belongs to $A$. Therefore $A$ is a vertex cover for $\Delta'$. It is minimal, because removing a vertex $y^{(i,j)}_1$ or a vertex $x_i$ would leave the facet $F_{i,j}$ uncovered.
\\ Finally, we prove that the length of $\alpha$ is less than or equal to $|A|$. Consider the symbol $\beta$ formed by the monomials
$$
u_{h_1,j_1},\dots,u_{h_s,j_s} \text{ and } u_{F_{i_1,1}},\dots,u_{F_{i_k,1}}.
$$
This is an admissible symbol on the vertex set $\left\{ x_1,\dots,x_n,y^{(1,1)}_1,\dots,y^{(1,1)}_{p_{(1,1)}},\dots, y^{(n,1)}_1,\dots, y^{(n,1)}_{p_{(n,1)}} \right\}$ for the simplicial complex $\Gamma$. Hence, by Proposition \ref{P2.whiskersimplcomplex}, $s+k \leq n$ and it follows that $|\alpha| = s + m_1 + \cdots + m_k \leq m_1 + \cdots + m_k + n-k = |A|$. Therefore,
\[
\bight I(\Delta') \leq \ara I(\Delta') \leq |\alpha| \leq |A| \leq \bight I(\Delta'),
\]
where the second inequality follows from Theorem \ref{Kimura}.
\end{proof}

\begin{rem}
Let $G$ be a graph and add $m_i \geq 1$ whiskers to each vertex of $G$. Let us call the graph $G'$ obtained in this way a \textit{multiwhisker graph on} $G$. Then, from Corollary \ref{multifacet}, it follows that $\bight I(G') = \ara I(G')$. Example \ref{multiwhisker} provides an explicit formula when $G=C_n$ and $m_i=k$ for every $i$.
\end{rem}

\bibliographystyle{plain}	                  % (uses file "plain.bst")
%\bibliography{Myrefs}		                  % expects file "Myrefs.bib"

\begin{thebibliography}{99}
{\small
\bibitem{B08} M. Barile, On the arithmetical rank of the edge ideals of forests. Comm. Algebra \textbf{36} (2008), 12, 4678-4703.
\bibitem{BKMY12} M. Barile, D. Kiani, F. Mohammadi and S. Yassemi, Arithmetical rank of the cyclic and bicyclic graphs. J. Algebra Appl. \textbf{11} (2012), 2, 14 pp.
\bibitem{BH93} W. Bruns and J. Herzog, \textit{Cohen-Macaulay Rings}. Cambridge University Press, Cambridge, 1996.
\bibitem{CN12} D. Cook and U. Nagel, Cohen-Macaulay graphs and face vectors of flag complexes. SIAM J. Discrete Math. \textbf{26} (2012), 1, 89-101.
\bibitem{DE09} A. Dochtermann and A. Engstr\"om, Algebraic properties of edge ideals via combinatorial topology. Electron. J. Combin \textbf{16} (2009), 2, 24 pp.
\bibitem{EOT10} V. Ene, O. Olteanu and N. Terai, Arithmetical rank of lexsegment edge ideals. Bull. Mat. Soc. Sci. Mat. Roumanie (N.S.) \textbf{53} (2010), 101, 315-327.
\bibitem{F02} S. Faridi, The facet ideal of a simplicial complex. Manuscripta Math. \textbf{109} (2002), 159-174.
\bibitem{F05} S. Faridi, Cohen-Macaulay properties of square-free monomial ideals. J. Combin. Theory Ser. A \textbf{109} (2005), 2, 299-329.
\bibitem{KM12} D. Kiani and F. Mohammadi, On the arithmetical rank of the edge ideals of some graphs. Alg. Coll. \textbf{19} (2012), 1, 797-806.
\bibitem{Ki09} K. Kimura, Lyubeznik resolutions and the arithmetical rank of monomial ideals. Proc. Amer. Math. Soc. \textbf{137} (2009), 11, 3627-3635.
\bibitem{KT13} K. Kimura and N. Terai, Binomial arithmetical rank of edge ideals of forests. Proc. Amer. Math. Soc. \textbf{141} (2013), 1925-1932.
\bibitem{Ku09} M. Kummini, Regularity, depth and arithmetic rank of bipartite edge ideals. J. Algebraic Combin. \textbf{30} (2009), 4, 429-445.
\bibitem{L83} G. Lyubeznik, On the local cohomology modules $H^i_{{\mathfrak{a}}}(R)$ for ideals ${\mathfrak{a}}$ generated by monomials in an $R$-sequence. In \textit{Complete Intersections}, Acireale 1983, Lecture Notes in Math., vol.1092, Springer-Verlag, Berlin, 1984, pp. 214-220.
\bibitem{L88} G. Lyubeznik, A new explicit finite free resolution of ideals generated by monomials in an $R$-sequence. J. Pure Appl. Algebra \textbf{51} (1988), 1-2, 193-195.
\bibitem{SV79} T. Schmitt and W. Vogel, Note on set-theoretic intersections of subvarieties of projective space. Math. Ann. \textbf{245} (1979), 3, 247-253.
\bibitem{V90} R.H. Villarreal, Cohen-Macaulay graphs. Manuscripta Math. \textbf{66} (1990), 3, 277-293.
}
\end{thebibliography}

\end{document}